\theoremstyle{definition}
\theoremstyle{remark}
\newtheorem{Rem}{Remark}
\theoremstyle{plain}
\newtheorem{thm}{Theorem}[section]
\newtheorem{lem}[thm]{Lemma}
\newcommand{\abs}[1]{\ensuremath{\left\vert #1 \right\vert}}
\DeclareMathOperator{\mat}{Mat}
\DeclareMathOperator{\Span}{Span}
\newcommand{\eps}{\varepsilon}
\newcommand{\bfx}{{\bf x}}
\newcommand{\bfy}{{\bf y}}
\newcommand{\bfv}{{\bf v}}
\newcommand{\bfu}{{\bf u}}
\newcommand{\bfq}{{\bf q}}
\newcommand{\bfe}{{\bf e}}
\newcommand{\bfze}{{\bf 0}}
\newcommand{\bfqu}{{\bf q}^{(1)}}
\newcommand{\bfqj}{{\bf q}^{(j)}}
\newcommand{\bfqd}{{\bf q}^{(d )}}
\newcommand{\bfqdpu}{{\bf q}^{(d+1)}}
\newcommand{\bfqm}{{\bf q}^{(m)}}
\newcommand{\bfqtiu}{{\bf \tilde q}^{(1)}}
\newcommand{\bfqtij}{{\bf \tilde q}^{(j)}}
\newcommand{\bfqtid}{{\bf \tilde q}^{(d)}}
\newcommand{\und}{\{1,\ldots,d\}}
\newcommand{\unn}{\{1,\ldots,n\}}
\newcommand{\unm}{\{1,\ldots,m\}}
\newcommand{\eti}{\tilde \bfe}
\newcommand{\Om}{\Omega}
\newcommand{\Ati}{\tilde A}
\newcommand{\Ainf}{A_{\infty}}
\newcommand{\lam}{\lambda}
\newcommand{\calC}{{\mathcal C}}
\newcommand{\vol}{{\rm vol}}
\renewcommand{\Im}{{\rm Im} \, }
\newcommand{\tra}{ \, \, ^t  }
\newcommand{\R}{\mathbb R}
\newcommand{\Z}{\mathbb Z}
\newcommand{\Q}{\mathbb Q}
\newcommand{\N}{\mathbb N}
\newcommand{\Qbar}{\overline \Q}
\begin{document}

\title[A converse to linear independence criteria]{A converse to
  linear independence criteria, valid almost everywhere}

\author{S. Fischler}

\address{S. Fischler, Equipe d'Arithm\'etique et de G\'eom\'etrie Alg\'ebrique,
  Universit\'e Paris-Sud, B\^atiment 425, 91405 Orsay Cedex, France}

\email{stephane.fischler@math.u-psud.fr}

\author{M. HUSSAIN}

\address{M. Hussain, Department of Mathematics, Aarhus University, Ny Munkegade 118,
  DK-8000 Aarhus C, Denmark}


\email{mhuss@imf.au.dk}

\author{S. KRISTENSEN}

\address{S. Kristensen, Department of Mathematics, Aarhus University, Ny Munkegade 118,
  DK-8000 Aarhus C, Denmark}

\thanks{SK's research supported by the Danish Research Council for
  Independent Research, MH's research is partially supported  by the Danish Research Council for Independent Research,  and SF's partially  by Agence Nationale de la Recherche (project HAMOT, ref. ANR 2010 BLAN-0115-01).}

\email{sik@imf.au.dk}

\author{J. Levesley}

\address{J. Levesley, Department of Mathematics, University of York,
York, YO10 5DD}

\email{jl107@york.ac.uk}

\subjclass[2010]{11J83 (Primary); 11J72, 11J13 (Secondary)}

\begin{abstract}
  We prove a weighted analogue of the Khintchine--Groshev Theorem,
  where the distance to the nearest integer is replaced by the
  absolute value. This is subsequently applied to proving the
  optimality of several linear independence criteria over the field of
  rational numbers.
\end{abstract}

\maketitle

\section{Introduction}
\label{sec:setup-result}

Let $n \geq 1$ and let $\psi_1, \dots, \psi_n : \mathbb{R}^+
\rightarrow \mathbb{R}^+$ be functions tending to zero. We will refer
to these functions as approximating functions or error functions. Let $\underline{\psi} =
(\psi_1, \dots, \psi_n)$. An $m \times n$-matrix
$X=(x_{ij})_{\substack{1 \leq i \leq n \\ 1 \leq j \leq m}} \in
\mathbb{R}^{mn}$ (or a system of linear forms) is said to be
$\underline{\psi}$-approximable if
\begin{equation}
  \label{eq:2}
  \abs{q_1 x_{1i} + \dots + q_m x_{mi}} < \psi_i(\abs{\mathbf{q}}), \quad 1
  \leq i \leq n
\end{equation}
for infinitely many integer vectors $\mathbf{q} = (q_1, \dots, q_m)
\in \mathbb{Z}^m \setminus \{\mathbf 0\}$. The norm $\abs{\mathbf{q}}$
is the supremum norm here and elsewhere.  We will denote the set of
$\underline{\psi}$-approximable linear forms inside the set
$[-\frac{1}{2}, \frac{1}{2}]^{mn}$ by $W_0(m,n,\underline{\psi})$.

The similarity between the $\underline{\psi}$-approximable linear
forms studied here and the simultaneously $\psi$-approximable linear forms usually
studied in Diophantine approximation is clear. However, in the
classical setup one studies the distance to the nearest integer rather
than the absolute value.

A major breakthrough in the classical theory was the
Khintchine--Groshev theorem \cite{groshev38, MR1544787}, which
establishes a zero-one law for the set of
$\underline{\psi}$-approximable matrices depending on the convergence
or divergence of a certain series. In the absolute value setting, an
analogue of this result was recently obtained by Hussain and Levesley
\cite{hussain}. Their result covers only the case $\psi_1 = \cdots =
\psi_n$ with this approximating function being monotonic.  The
condition of monotonicity was removed by Hussain and Kristensen
\cite{hussain:_metric} in the case of a single approximating function.

In the present paper, we extend the results of \cite{hussain} and
\cite{hussain:_metric} to the weighted setup, \emph{i.e.}, the case of
more than one approximating function. This has applications to linear
independence criteria, as we shall see below. Our zero-one law states
the following.

\begin{thm}
  \label{thm:KG}
  Let $m > n > 0$ and let $\psi_1, \dots, \psi_n$ be approximating
  functions as above. Then, if $(m,n) \neq (2,1)$,
  \begin{equation*}
    \lambda_{mn}(W_0(m,n,\underline{\psi})) =
    \begin{cases}
      0 & \text{if } \sum_{r=1}^\infty \psi_1(r) \cdots \psi_n(r)
      r^{m-n-1} < \infty \\
      1 & \text{if } \sum_{r=1}^\infty \psi_1(r) \cdots \psi_n(r)
      r^{m-n-1} = \infty,
    \end{cases}
  \end{equation*}
  where $\lambda_{mn}$ denotes the $m n$-dimensional Lebesgue
  measure. If $(m,n)=(2,1)$, the same conclusion holds provided the
  error function is monotonic.
\end{thm}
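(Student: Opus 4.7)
The convergence direction is a routine Borel--Cantelli argument. For fixed $\mathbf{q}\in\mathbb{Z}^m\setminus\{\mathbf{0}\}$ and each $i\in\{1,\ldots,n\}$, the set $\{\mathbf{x}_i\in[-\tfrac12,\tfrac12]^m:|\mathbf{q}\cdot\mathbf{x}_i|<\psi_i(|\mathbf{q}|)\}$ is the intersection of the cube with a single slab through the origin of transverse width $2\psi_i(|\mathbf{q}|)/\|\mathbf{q}\|_2$, so its $m$-dimensional measure is $\ll\psi_i(|\mathbf{q}|)/|\mathbf{q}|$. Taking the product over $i$ and grouping $\mathbf{q}$ by $|\mathbf{q}|=r$ (of which there are $\asymp r^{m-1}$) bounds $\sum_{\mathbf{q}}\lambda_{mn}(\Lambda_{\mathbf{q}})$ by a constant multiple of $\sum_r r^{m-n-1}\psi_1(r)\cdots\psi_n(r)$, where $\Lambda_{\mathbf{q}}\subset[-\tfrac12,\tfrac12]^{mn}$ denotes the set of matrices satisfying \eqref{eq:2} for that particular $\mathbf{q}$. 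When this series converges, Borel--Cantelli yields $\lambda_{mn}(W_0(m,n,\underline\psi))=0$.

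For the divergence direction I would run a quasi-independence / Chung--Erd\H{o}s argument on the family $\{\Lambda_{\mathbf{q}}\}_{\mathbf{q}\in\mathbb{Z}^m\setminus\{\mathbf{0}\}}$, whose $\limsup$ is exactly $W_0(m,n,\underline\psi)$. The basic correlation estimate is that, for linearly independent $\mathbf{q},\mathbf{q}'\in\mathbb{Z}^m$ and each $i$, the two slabs defining the $\mathbf{x}_i$-factors of $\Lambda_{\mathbf{q}}$ and $\Lambda_{\mathbf{q}'}$ meet in $m$-measure $\ll\psi_i(|\mathbf{q}|)\psi_i(|\mathbf{q}'|)/|\mathbf{q}\wedge\mathbf{q}'|$, where $|\mathbf{q}\wedge\mathbf{q}'|$ is the Euclidean norm of the vector of $2\times 2$ minors and is bounded below by $1$ for linearly independent integer vectors. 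Averaging over pairs, the dominant contribution comes from $|\mathbf{q}\wedge\mathbf{q}'|\asymp|\mathbf{q}||\mathbf{q}'|$, which reproduces $\bigl(\sum_{\mathbf{q}}\lambda_{mn}(\Lambda_{\mathbf{q}})\bigr)^{2}$; the parallel and near-parallel pairs, which in $\mathbb{Z}^m$ collapse to pairs of rational multiples of a common primitive vector, contribute a lower-order error as soon as $(m,n)\ne(2,1)$. A careful bookkeeping along these lines refines Chung--Erd\H{o}s from positive measure to asymptotic independence and yields $\lambda_{mn}(W_0)=1$ directly, without the need for a separate zero-one law.

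The principal obstacle, and the precise reason for the $(m,n)=(2,1)$ exception, is the parallel-pairs contribution. For $m=2$ and $n=1$, summing $\lambda_2(\Lambda_{a\mathbf{q}}\cap\Lambda_{b\mathbf{q}})$ over positive integers $a,b\le N/|\mathbf{q}|$ gives a Dirichlet-type expression of the same order of magnitude as the main term $\sum_r\psi(r)$, and this genuinely obstructs quasi-independence, just as in the classical Duffin--Schaeffer phenomenon in dimension one. Monotonicity of $\psi$ rescues the argument by allowing dyadic decomposition and the elementary bound $\sum_{k\le K}\psi(kr)/k\ll\psi(r)\log K$, in the spirit of Gallagher's proof of the classical Khintchine theorem, and restores quasi-independence up to logarithmic losses that Chung--Erd\H{o}s absorbs. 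For $(m,n)\ne(2,1)$ the extra transverse room provided by $m-n\ge 2$ or by the product $\psi_1\cdots\psi_n$ with $n\ge 2$ makes the parallel contribution automatically negligible, but removing monotonicity here will require adapting the divisor-sum decomposition of \cite{hussain:_metric} to the weighted product, and this is where the bulk of the technical work should lie.
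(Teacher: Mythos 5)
Your convergence argument coincides with the paper's and is fine. On the divergence side you take a genuinely different route: the paper does not do a direct pair-correlation computation on the absolute-value slabs. Instead it writes each matrix as $\binom{I_n}{\tilde A}X$ with $X$ the top $n\times n$ block, reduces (Lemmas~\ref{lem:lem1}, \ref{lem:2}) to the \emph{classical} nearest-integer sets $B'_{\mathbf q}$ in $M_{(m-n)\times n}$, and then invokes the already-established quasi-independence-on-average estimates for those sets (\cite{MR0159802} for $m-n>2$, \cite{hussain:_khint} for $m-n=2$, \cite{MR0157939} with monotonicity for $m-n=1$). Your route of doing the wedge-product overlap estimate $\lambda(\Lambda_{\mathbf q}\cap\Lambda_{\mathbf q'})\ll\prod_i\psi_i(|\mathbf q|)\psi_i(|\mathbf q'|)/|\mathbf q\wedge\mathbf q'|^n$ from scratch is plausible in outline and would avoid the change of variables, but you would effectively have to reprove those correlation bounds rather than cite them, and the parallel-pairs bookkeeping is exactly where the cited works spend their effort.

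The genuine gap, though, is your claim that ``careful bookkeeping refines Chung--Erd\H{o}s from positive measure to asymptotic independence and yields $\lambda_{mn}(W_0)=1$ directly.'' Chung--Erd\H{o}s with quasi-independence on average ($\sum\lambda(\Lambda_{\mathbf q}\cap\Lambda_{\mathbf q'})\ll S_N^2$) gives only a positive lower bound, not measure one; to get measure one directly you would need the much stronger two-sided asymptotic $\sum\lambda(\Lambda_{\mathbf q}\cap\Lambda_{\mathbf q'})=(1+o(1))S_N^2$, and nothing in your wedge estimate (which is a one-sided $\ll$ with loss of constants, e.g. a factor depending on $m,n$) produces that. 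In the classical setting one upgrades positive measure to full measure by a zero-one law exploiting translation invariance mod $1$, but the sets $W_0$ here are \emph{not} $\mathbb Z^{mn}$-periodic, so that tool is unavailable. The paper handles this with a separate inflation argument (Cassels) combined with the observation that $0$ is a point of metric density of $W_0(m,n,\underline\psi')$ for a slightly shrunk error function $\underline\psi'$: since the parallelepipeds defining each $\Lambda_{\mathbf q}$ contain $0$ as an interior point and their orientations equidistribute as $|\mathbf q|$ grows, a.e.\ $A$ is a scalar multiple $r\tilde A$ of some $\tilde A\in W_0(m,n,\underline\psi')$ near the origin, and $r\tilde A\in W_0(m,n,\underline\psi)$ because $\tau(r)=\psi_i'/\psi_i\to 0$. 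You need some such additional step; Chung--Erd\H{o}s alone will not close the argument.
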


The case $m \leq n$ is of less interest in general, and of no
particular interest to us for applications. Briefly, in this case the
set $W_0(m,n,\underline{\psi})$ becomes a subset of a lower
dimensional set. An easy instance is that of $m=n=1$, where it is
straightforward to prove that the set is in fact a singleton -- see,
\emph{e.g.}, Lemma 1 in \cite{dickinson} for details. This is in contrast to the classical case, where approximation to the nearest integer is considered. Here, the result is independent of the relative sizes of $m$ and $n$.

This setting where linear forms are very small at some points appears
  in linear independence criteria. To begin with, let us consider
the case of one point. Siegel has proved, using essentially a
determinant argument, that the existence of $m$ linearly independent
linear forms very small at a given point $\bfe_1 =
(\xi_1,\ldots,\xi_m)\in\R^m$ implies a lower bound on the dimension of
the $\Q$-vector space spanned by $\xi_1,\ldots,\xi_m$. A precise
statement is given by Theorem \ref{thcrit} below with assumption $(i)$
and $n=1$; notice that $\dim_\Q\Span_\Q(\xi_1,\ldots,\xi_m)$ is equal
to the dimension of the smallest subspace $F$ of $\R^m$, defined over
the rationals, which contains the point $\bfe_1 =
(\xi_1,\ldots,\xi_m)$. The reader may refer to \S 8 of \cite{Boualg}
for classical facts about subspaces defined over the rationals, to
Lemma 1 of \cite{SFnestsev} (\S 2.3) for a generalization of this
equality, and to \cite{EMS} (especially pp.~81--82 and 215--216) for
more details on Siegel's criterion, including applications.

On the other hand, still in the case of one point $\bfe_1 =
(\xi_1,\ldots,\xi_m)$, Nesterenko has derived \cite{Nesterenkocritere}
a similar lower bound for $\dim_\Q\Span_\Q(\xi_1,\ldots,\xi_m)$ from
the existence of just {\em one} linear form (for each $Q $
sufficiently large), small at $\bfe_1$ but not too small: see Theorem
\ref{thcrit} below with assumption $(ii)$ and $n=1$. The most striking
application of his result is the proof by Rivoal \cite{RivoalCRAS} and
Ball-Rivoal \cite{BR} that infinitely many values of Riemann $\zeta$
function at odd integers $s\geq 3$ are irrational.

The first author has generalized recently Nesterenko's linear
independence criterion to linear forms small at several points (see
\cite{SFnestsev}, Theorem 3). The statement is the following, with
assumption $(ii)$. We provide also (under assumption $(i)$) the
analogue of Siegel's criterion in this setting (see \cite{SFnestsev},
\S 2.4, Proposition 1). We denote by $\cdot$ the canonical scalar
product on $\R^m$ (which allows us to consider a linear form as the
scalar product with a given vector), and by $o(1)$ any sequence that
tends to 0 as $Q\to\infty$.

\begin{thm} \label{thcrit} Let $m > n > 0$, and $ \bfe_1,\ldots,\bfe_n
  \in \R^m$. Let $\tau_1,\ldots,\tau_n$ be positive real numbers.
  Assume that one of the following holds:
   \begin{itemize}
   \item[$(i)$] The vectors $ \bfe_1,\ldots,\bfe_n $ are linearly
     independent, and for infinitely many integers $Q$ there exist $m$
     linearly independent vectors $\bfqu, \ldots, \bfqm \in\Z^m$ such
     that, for any $ j \in \unm$:
   $$| \bfqj | \leq Q \mbox{ and } | \bfqj  \cdot \bfe_i | \leq
   Q^{-\tau_i +o(1)} \mbox{ for any } i \in \unn. $$
 \item[$(ii)$] The numbers $\tau_1,\ldots,\tau_n$ are pairwise
   distinct, and for any sufficiently large integer $Q$ there exists
   $\bfq \in\Z^m$ such that
   $$| \bfq | \leq Q \mbox{ and } | \bfq  \cdot \bfe_i | = Q^{-\tau_i +o(1)}   \mbox{ for any } i \in \unn.   $$
     \end{itemize}
     Then we have
   $$\dim F \geq n+    \tau_1+\ldots+\tau_n$$
   for any subspace $F$ of $\R^m$ which contains $
   \bfe_1,\ldots,\bfe_n$ and is defined over the rationals.
   \end{thm}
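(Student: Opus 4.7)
The plan is to argue by contradiction: set $d := \dim F$, assume $d < n + \tau_1 + \cdots + \tau_n$, and produce a nonzero element of a rank-one lattice inside $\bigwedge^d F$ whose norm is simultaneously bounded below by a positive constant depending only on $F$ and above by a negative power of $Q$. I would treat hypothesis $(i)$ directly by a Siegel-type exterior-algebra argument, and then reduce hypothesis $(ii)$ to $(i)$ by a dichotomy/induction using the pairwise distinctness of the $\tau_i$.

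For $(i)$: let $\pi_F : \R^m \to F$ be orthogonal projection. Since $F$ is defined over $\Q$, $\Lambda := \pi_F(\Z^m)$ is a lattice of rank $d$ in $F$, hence $\bigwedge^d \Lambda$ is a rank-one lattice of covolume $c_F > 0$ in the one-dimensional space $\bigwedge^d F$. Set $\mathbf{p}^{(j)} := \pi_F(\bfq^{(j)}) \in \Lambda$. Because the $\bfq^{(j)}$ are linearly independent and $\ker \pi_F = F^\perp$ has dimension $m-d$, the vectors $\mathbf{p}^{(j)}$ span $F$; after relabelling, $\mathbf{p}^{(1)}, \ldots, \mathbf{p}^{(d)}$ form a basis of $F$, so $\omega := \mathbf{p}^{(1)} \wedge \cdots \wedge \mathbf{p}^{(d)}$ is a nonzero element of $\bigwedge^d \Lambda$ and thus $|\omega| \geq c_F$. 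Now decompose $F = E \oplus V$ orthogonally with $E := \Span_\R(\bfe_1, \ldots, \bfe_n)$ and $V := E^\perp \cap F$, and split $\mathbf{p}^{(j)} = \mathbf{a}^{(j)} + \mathbf{c}^{(j)}$ with $\mathbf{a}^{(j)} \in E$, $\mathbf{c}^{(j)} \in V$. Since both $V$ and $F^\perp$ are orthogonal to each $\bfe_i$, one has $\mathbf{a}^{(j)} \cdot \bfe_i = \bfq^{(j)} \cdot \bfe_i$. Expanding $\omega$ via $\bigwedge^d F = \bigwedge^n E \otimes \bigwedge^{d-n} V$ as a sum over $n$-subsets $T \subseteq \{1, \ldots, d\}$, each $E$-factor equals, up to a constant depending only on the Gram matrix of the $\bfe_i$, the determinant $\det(\bfq^{(j)} \cdot \bfe_i)_{j \in T,\, 1 \leq i \leq n}$; a Leibniz expansion bounds it by $n!\, Q^{-(\tau_1 + \cdots + \tau_n) + o(1)}$, while each $V$-factor is trivially bounded by $C\, Q^{d-n}$ using $|\mathbf{c}^{(j)}| \leq |\bfq^{(j)}| \leq Q$. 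Hence $|\omega| \leq C' Q^{d - n - (\tau_1 + \cdots + \tau_n) + o(1)}$, contradicting $|\omega| \geq c_F$ for $Q$ large under the assumption $d < n + \tau_1 + \cdots + \tau_n$.

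For $(ii)$: only one vector $\bfq = \bfq_Q$ per $Q$ is available, so the plan is to return to the setting of $(i)$ by building $m$ linearly independent vectors out of the family $(\bfq_Q)_Q$ as $Q$ varies. I would argue by induction, with the following dichotomy: either the $\bfq_Q$'s produced for a cofinal set of $Q$ span $\R^m$, in which case we extract an $m$-tuple satisfying the hypotheses of $(i)$ and invoke the previous case; or they are confined to a common proper rational subspace $G \subsetneq \R^m$, to which the problem restricts after suitably relabelling and trimming the list of $\bfe_i$. The pairwise distinctness of the $\tau_i$ is essential at both steps: it prevents the $\bfq_Q$'s from remaining in a given proper subspace as $Q$ grows, and it ensures that after restriction to $G$ the remaining exponents stay strictly distinct so that the induction can close.

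The main obstacle is clearly the reduction $(ii) \Rightarrow (i)$: the wedge-product argument for $(i)$ is essentially mechanical once one realises that the correct top form lives in $\bigwedge^d F$ rather than in $\bigwedge^m \R^m$, but the induction in $(ii)$ requires delicate bookkeeping of how the approximation data, the subspace $F$, and the strict ordering of the $\tau_i$ behave under passing to a rational subspace, which is where the characteristic difficulty of Nesterenko-type criteria lies.
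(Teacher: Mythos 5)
The statement you were asked to prove is not actually proved in this paper: Theorem~\ref{thcrit} is quoted from \cite{SFnestsev} (Theorem~3 for assumption~$(ii)$, and \S 2.4, Proposition~1 for assumption~$(i)$), so there is no in-paper argument to compare against; I assess your proposal on its own terms.

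Your treatment of $(i)$ is correct and is in substance Siegel's determinant argument carried out on $\bigwedge^d F$. Since $F$ and $F^\perp$ are both rational, $\pi_F(\Z^m)$ is a full-rank lattice in $F$, so the nonzero wedge $\omega$ satisfies $|\omega|\geq c_F>0$. The identity $\mathbf{a}^{(j)}\cdot\bfe_i=\bfq^{(j)}\cdot\bfe_i$ is right, and projecting the top wedge onto $\bigwedge^nE\otimes\bigwedge^{d-n}V$ while estimating the $E$-factor through the Gram matrix of $(\bfe_1,\dots,\bfe_n)$ yields $|\omega|\ll Q^{\,d-n-(\tau_1+\cdots+\tau_n)+o(1)}$, contradicting $|\omega|\geq c_F$ for $Q$ large once $d<n+\tau_1+\cdots+\tau_n$.

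The reduction $(ii)\Rightarrow(i)$ is where the proposal breaks, and the gap is not merely bookkeeping. Hypothesis $(i)$ demands, for infinitely many single values of $Q$, a full system of $m$ linearly independent vectors all satisfying $|\bfq^{(j)}\cdot\bfe_i|\leq Q^{-\tau_i+o(1)}$ for that \emph{common} $Q$. Hypothesis $(ii)$ supplies one vector $\bfq_Q$ per $Q$, and a vector $\bfq_{Q'}$ with $Q'<Q$ is admissible in the system for the parameter $Q$ only when $Q'\geq Q^{1-o(1)}$, since otherwise $|\bfq_{Q'}\cdot\bfe_i|=(Q')^{-\tau_i+o(1)}$ is much larger than $Q^{-\tau_i+o(1)}$. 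Your dichotomy (the $\bfq_Q$ eventually span $\R^m$, or lie in a proper rational $G$) controls only the limiting behaviour, not the \emph{rate} at which the direction of $\bfq_Q$ diversifies. If some $\bfe_i$ admits an unusually good rational approximation, the $\bfq_Q$ produced by $(ii)$ can stay proportional to a single integer vector over a range $Q\in[Q_0,Q_0^{K}]$ with $K$ arbitrarily large (take integer multiples of one very good form and tune the multiplier so the two-sided estimate in $(ii)$ is met), so no window of relative width $Q^{o(1)}$ need contain two linearly independent $\bfq_Q$'s. For $m=2$, $n=1$, Theorem~2 of \cite{eddzero}, cited in the present paper, quantifies exactly this: $(ii)$ holds iff $\tau_1<1/(\mu(\xi)-1)$, the threshold shrinking precisely as $\xi$ becomes better approximable, i.e.\ precisely in the regime where the vectors stagnate. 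The known proofs in \cite{Nesterenkocritere} and \cite{SFnestsev} do not pass through assumption~$(i)$ at all; they exploit the lower bound $|\bfq_Q\cdot\bfe_i|\geq Q^{-\tau_i+o(1)}$ directly in an inductive argument on rational subspaces, and that lower bound does strictly more work there than merely preventing eventual confinement. To complete $(ii)$ you would need to replace the extraction-and-dichotomy step with such a direct argument.
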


   Note that $ \bfe_1,\ldots,\bfe_n $ are always $\R$-linearly
   independent: this is assumed in $(i)$, and it is an easy
   consequence of assumption $(ii)$ since $\tau_1,\ldots,\tau_n$ are
   pairwise distinct (see \cite{SFnestsev}, \S 3.2). The point is that
   $\Span_\R ( \bfe_1,\ldots,\bfe_n)$ is not defined over the
   rationals.

   The conclusion of Theorem \ref{thcrit} is a lower bound for $\dim
   F$ (which can be stated as a lower bound for the rank of a family
   of $m$ vectors in $\R^n$ seen as a $\Q$-vector space, see
   \cite{SFnestsev}, \S 2.3, Lemma 1). It is a natural question to ask
   whether this bound can be improved; we give a negative answer in
   Theorem \ref{thopti}. In the case of Nesterenko's linear
   independence criterion with only one point, Chantanasiri has given
   (\cite{Chantanasiri2}, \S 3) a very specific example of a point
   $\bfe_1= (\xi_1,\ldots,\xi_m)$ for which this bound is optimal
   (namely when $(\xi_1,\ldots,\xi_m)$ is a $\Q$-basis of a real
   number field of degree $m$). On the contrary, our result deals with
   generic tuples; it encompasses also Siegel's criterion, and the
   case of several points.

\begin{thm}
  \label{thopti}
  Let $m > n > 0$, and $F$ be a subspace of $\R^m$ defined over the
  rationals. Let $\tau_1,\ldots,\tau_n$, $\beta_1,\ldots,\beta_n$,
  $\eps$ be real numbers such that $\tau_1>0$, \ldots, $\tau_n>0$,
  $\eps>0$,
  \begin{equation} \label{eqhypopti} \tau_1+\ldots+\tau_n \leq \dim F
    - n \mbox{ and } \beta_1+\ldots+\beta_n = (1+\eps)(\dim F-1).
  \end{equation}
  Then for almost all $n$-tuples $(\bfe_1,\ldots,\bfe_n)\in F^n$ (with
  respect to Lebesgue measure) the following property holds.  For any
  sufficiently large integer $Q$ there exist $m$ linearly independent
  vectors $\bfqu, \ldots, \bfqm \in\Z^m$ such that, for any $ j \in
  \unm$:
  \begin{equation} \label{eqopti1} |\bfqj| \ll Q
  \end{equation}
  and
  \begin{equation} \label{eqopti2} Q^{-\tau_i}(\log Q)^{\beta_i -
      (1+\eps)\dim F} \ll | \bfqj \cdot \bfe_i | \ll Q^{-\tau_i}(\log
    Q)^{\beta_i } \mbox{ for any } i \in \unn,
  \end{equation}
  where the constants implied in the symbols $\ll$ depend on $m$, $n$,
  $F$, $\tau_1,\ldots,\tau_n$, $\beta_1,\ldots,\beta_n$, $\eps$,
  $\bfe_1,\ldots,\bfe_n$ but not on $Q$.
\end{thm}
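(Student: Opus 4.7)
The strategy is to reduce to a weighted approximation problem in dimension $d := \dim F$ so that Theorem \ref{thm:KG} applies directly, then to refine the qualitative conclusion to a quantitative one covering every large $Q$, with a matching lower bound and $d$-fold linear independence of the approximating lattice vectors.

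\textbf{Reduction.} Clearing denominators in a rational basis of $F$ produces an integer basis $\bfv_1,\ldots,\bfv_d \in \Z^m$ of a full-rank sublattice of $F \cap \Z^m$, and the $\R$-linear isomorphism $\phi : \R^d \to F$, $\bfy \mapsto \sum_k y_k \bfv_k$, together with its $n$-fold product, carries Lebesgue measure on $(\R^d)^n$ to a constant multiple of Lebesgue measure on $F^n$. The $\Z$-linear map $L : \Z^m \to \Z^d$, $L(\bfq) := (\bfq \cdot \bfv_1,\ldots,\bfq \cdot \bfv_d)$, surjects onto a finite-index sublattice of $\Z^d$ with kernel $K := F^\perp \cap \Z^m$ of rank $m-d$; fix a $\Z$-basis ${\bf k}_1,\ldots,{\bf k}_{m-d}$ of $K$ once and for all. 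Writing $\bfe_i = \phi(\bfy_i)$ one has $\bfq \cdot \bfe_i = L(\bfq) \cdot \bfy_i$, so it suffices to find, for a.e.\ $\bfy = (\bfy_1,\ldots,\bfy_n) \in (\R^d)^n$ and every large $Q$, $d$ linearly independent vectors ${\bf c}^{(1)},\ldots,{\bf c}^{(d)} \in \Z^d$ with $|{\bf c}^{(j)}| \ll Q$ and ${\bf c}^{(j)} \cdot \bfy_i$ in the two-sided band of \eqref{eqopti2}: a Minkowski-minimal integer lift $\bfqj \in L^{-1}({\bf c}^{(j)})$ satisfies $|\bfqj| \ll |{\bf c}^{(j)}| \ll Q$, and the extra vectors $\bfq^{(d+s)} := \bfqu + {\bf k}_s$ for $s = 1,\ldots,m-d$ preserve the inner products (since ${\bf k}_s \in F^\perp$) and complete $\bfqu,\ldots,\bfqd$ to a linearly independent $m$-tuple satisfying \eqref{eqopti1}--\eqref{eqopti2}.

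\textbf{Khintchine--Groshev and second moment.} Apply Theorem \ref{thm:KG} in dimension $d$ with $\psi_i(r) = r^{-\tau_i}(\log r)^{\beta_i}$; these tend to zero and are eventually monotonic, so the possible case $(d,n)=(2,1)$ is permitted. The assumption $\sum_i \tau_i \leq d-n$ together with $\sum_i \beta_i = (1+\eps)(d-1) > -1$ makes the series $\sum_r r^{d-n-1-\sum_i \tau_i}(\log r)^{(1+\eps)(d-1)}$ diverge, yielding for a.e.\ $\bfy$ infinitely many ${\bf c} \in \Z^d$ that realise the upper bound in \eqref{eqopti2}. To hit every dyadic window with a matching lower bound, introduce
\[
S_k(\bfy) := \#\!\left\{{\bf c} \in \Z^d : |{\bf c}| \in [2^k,2^{k+1}], \ c_0\, \psi_i(2^k)\, k^{-(1+\eps)d} < |{\bf c} \cdot \bfy_i| < C_0\, \psi_i(2^k) \ \text{for all } i\right\}
\]
for suitable constants $c_0 < C_0$. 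A direct lattice-point calculation gives $E[S_k] \asymp 2^{k(d-n-\sum_i\tau_i)} k^{(1+\eps)(d-1)}$, and a quasi-independence bound for the band events over distinct ${\bf c}$ gives $\mathrm{Var}[S_k] \ll E[S_k]$. Chebyshev's inequality then bounds the Lebesgue measure of $\{\bfy : S_k(\bfy) \leq E[S_k]/2\}$ by $O(1/E[S_k])$, a quantity summable in $k$ in both the critical case $\sum_i \tau_i = d-n$ (since $d \geq 2$ forces $(1+\eps)(d-1) > 1$) and the slack case (geometric decay in $k$). Borel--Cantelli then yields $S_k(\bfy) \to \infty$ for a.e.\ $\bfy$, providing the two-sided band in each dyadic window, and hence for every large $Q$ by taking $k = \lfloor \log_2 Q \rfloor$.

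\textbf{Linear independence and main obstacle.} The same second-moment counting applied to ${\bf c} \in H \cap \Z^d$ for any fixed proper subspace $H$ of $\R^d$ defined over $\Q$ gives $E[S_k^H] \asymp 2^{k(d-1-n-\sum_i \tau_i)} k^{(1+\eps)(d-1)}$, smaller than $E[S_k]$ by a factor $2^{-k}$. A further Chebyshev-plus-Borel--Cantelli argument forces $S_k^H(\bfy) \ll E[S_k^H]$ for all large $k$, a.s.\ in $\bfy$; intersecting over the countable collection of such $H$, for a.e.\ $\bfy$ and every large $k$ the in-band vectors in $[2^k,2^{k+1}]$ cannot be trapped in any single proper rational subspace, hence span $\R^d$ and supply $d$ linearly independent ${\bf c}^{(1)},\ldots,{\bf c}^{(d)}$; the lift of the first paragraph then produces the required $\bfqu,\ldots,\bfqm \in \Z^m$. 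The technical heart of the argument is the variance estimate $\mathrm{Var}[S_k] \ll E[S_k]$ and its uniform counterpart for the subspace restrictions; the pairwise quasi-independence of the band events in the absolute-value setting parallels the arguments underlying Theorem \ref{thm:KG} itself (cf.\ \cite{hussain, hussain:_metric}), and it is the strict positivity of $\eps$ that ensures summability of the Chebyshev errors through the factor $k^{(1+\eps)(d-1)}$.
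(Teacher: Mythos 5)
Your reduction to dimension $d=\dim F$ via an integer basis of $F$ and completion through $F^\perp\cap\Z^m$ is essentially the same as the paper's. But the core of the argument then diverges sharply, and the divergence introduces real gaps.

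The paper does \emph{not} use the divergence case of Theorem~\ref{thm:KG}. It uses only the \emph{convergence} case, together with Minkowski's theorem on successive minima of the convex body
\[
\calC = \{\bfq\in\R^m : |\bfq|\leq Q,\ |\bfq\cdot\bfe_i|\leq Q^{-\tau_i}(\log Q)^{\beta_i}\ \forall i\}.
\]
The convergent case yields full-measure sets $A_0,A_1,\ldots,A_n$ of tuples $(\bfe_1,\ldots,\bfe_n)$ which are \emph{not} too well approximable; this gives a lower bound $\lambda_1\gg(\log Q)^{-(1+\eps)}$ on the first minimum. Since $\vol(\calC)\asymp(\log Q)^{(1+\eps)(m-1)}$, Minkowski's theorem immediately forces $\lambda_m\ll 1$, which hands you $m$ linearly independent lattice vectors in $O(1)\cdot\calC$ for free, with both sides of \eqref{eqopti2} following from membership in the $A_i$. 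No second-moment input whatsoever.

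By contrast, your route asserts two things that are not established and would require substantial work. First, the pointwise variance bound $\mathrm{Var}[S_k]\ll E[S_k]$ is much stronger than the quasi-independence \emph{on average} that underlies the divergence case of Theorem~\ref{thm:KG} (which is the form actually proved in \cite{MR0157939,MR0159802,hussain:_khint} and used in \cite{hussain,hussain:_metric}). In the absolute-value setting, nearly collinear pairs ${\bf c},{\bf c}'$ yield essentially the same slabs and hence covariances of order $E[S_k]/2^{kd}$, but their number is huge; without restricting to primitive vectors (or an equivalent sieving device that you do not introduce), the off-diagonal sum cannot be bounded by $E[S_k]$, and indeed this is exactly the obstruction behind the Duffin--Schaeffer phenomenon the paper mentions. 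Second, the linear-independence step has a quantifier error: for each \emph{fixed} proper rational subspace $H$ you get, by Borel--Cantelli, a threshold $k_0(\bfy,H)$ beyond which $S_k^H\ll E[S_k^H]$, and the exceptional nulls union up over the countable family of $H$. But you need, for all large $k$, the in-band vectors at scale $2^k$ to avoid being trapped in \emph{any} proper subspace, and the set of subspaces that could conceivably trap them (rational subspaces spanned by vectors of norm $\leq 2^{k+1}$) grows without bound with $k$. The thresholds $k_0(\bfy,H)$ need not be uniform in $H$, and a union bound over all such $H$ at a given scale $k$ does not obviously converge. Minkowski's theorem circumvents this entirely: the $d$ successive minima \emph{are} linearly independent by construction.

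In short: same reduction, but then a genuinely different and considerably more involved strategy, carrying two unproved and delicate claims. The paper's convergence-plus-Minkowski argument is both shorter and avoids every one of these pitfalls.
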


This result will be proved in \S \ref{secpreuveopti}, using Theorem
\ref{thm:KG} and Minkowski's theorem on successive minima of a convex
body. We also postpone until \S \ref{secremopti} some remarks on
Theorem \ref{thopti}.

\bigskip

Throughout we will use the Vinogradov notation, \emph{i.e.}, for two
real quantities $x$ and $y$, we will write $x \ll y$ if there is a
constant $C> 0$ such that $x \leq Cy$. In Landau's $O$-notation this
would amount to writing $x = O(y)$. If $x \ll y$ and $y \ll x$, we
will write $x \asymp y$.


\section{A converse to linear independence criteria}

\subsection{Remarks  on Theorem \ref{thopti}} \label{secremopti}

We gather in this section several remarks on Theorem \ref{thopti}.

\begin{Rem} \label{remsuite} In general Nesterenko's criterion is
  stated under a slightly different assumption than $(ii)$ in Theorem
  \ref{thcrit}: it is assumed that there exist an increasing sequence
  $(Q_k)_{k\geq 1}$ of positive integers such that $Q_{k+1} =
  Q_k^{1+o(1)}$ as $k\to\infty$ (where the sequence denoted by $o(1)$
  tends to 0 as $k\to\infty$), and a sequence $(\bfq_k)_{k\geq 1}$ of
  vectors in $\Z^m$, such that for any $k$:
   $$| \bfq_k | \leq Q_k \mbox{ and } | \bfq_k  \cdot \bfe_i |  =  Q_k^{-\tau_i +o(1)}   \mbox{ for any } i \in \unn. $$
   Requesting also $\tau_1,\ldots,\tau_n$ to be pairwise distinct,
   this is actually equivalent to assumption $(ii)$ of Theorem
   \ref{thcrit}. In precise terms, if there is such a sequence $(Q_k)$
   then for any $Q$ sufficiently large one may choose the integer $k$
   such that $Q_k \leq Q < Q_{k+1}$, and let $\bfq = \bfq_k$. The
   converse is easy too: if assumption $(ii)$ of Theorem \ref{thcrit}
   holds, then one can choose {\em any} increasing sequence
   $(Q_k)_{k\geq 1}$ of positive integers such that $Q_{k+1} =
   Q_k^{1+o(1)}$ (for instance $Q_k = \beta^k$ with an arbitrary
   $\beta>1$) and let $\bfq_k$ be the vector corresponding to $Q =
   Q_k$.

   This remark shows that $ \tau_r(\underline \xi) =\tau'_r(\underline
   \xi) = \tau''_r(\underline \xi)$ for any $\underline \xi$ in the
   notation of \S 4.3 of \cite{eddzero}.  With the same notation,
   Theorem \ref{thopti} (with $F =\R^m$ and $n=1$) implies that this
   Diophantine exponent is equal to $m-1$ for almost all $\underline
   \xi = \bfe_1 \in \R^m$ (with respect to Lebesgue measure); this
   answers partly a question asked at the end of \cite{eddzero}.
\end{Rem}

\bigskip

\begin{Rem}
  In the setting of Theorem \ref{thopti}, if $\bfe_1,\ldots,\bfe_n$
  are $\Q$-linearly independent and belong to $F\cap\Qbar^m$ then
  applying Schmidt's Subspace Theorem instead of Theorem \ref{thm:KG}
  in the proof yields the same conclusion as that of Theorem
  \ref{thopti}, except that Eq. \eqref{eqopti2} is weakened to $ |
  \bfqj \cdot \bfe_i | = Q^{-\tau_i+o(1)}$.
\end{Rem}

\bigskip

In the rest of this section, we shall focus on the special case $m=2$,
$n=1$, $F=\R^2$. By homogeneity we may restrict to vectors $\bfe_1 =
(\xi,-1)$ with $\xi\in\R$. Since non-zero linear forms in $\xi$ and
$-1$ with integer coefficients are bounded from below in absolute
value if $\xi$ is a rational number, we assume $\xi$ to be irrational.
Recall that the irrationality exponent of $\xi$, denoted by
$\mu(\xi)$, is the supremum (possibly $+\infty$) of the set of $\mu>0$
such that there exist infinitely many $p,q\in\Z$ with $q>0$ such that
$|\xi-\frac{p}q|\leq q^{-\mu}$. Then the first question related to
Theorem \ref{thopti} is to know for which $\tau>0$ the following
holds:
\begin{eqnarray}
  &\mbox{For any $Q$ there exists $\bfq = (q_1,q_2)\in\Z^2\setminus\{(0,0)\}$ such that}& \label{eqdimun}  \\
  & |\bfq|\leq Q \mbox{  and } |q_1\xi-q_2|=Q^{-\tau+o(1)}.&\nonumber
\end{eqnarray}
Lemma 1 and Theorem 2 of \cite{eddzero} imply (using Remark
\ref{remsuite} above) that \eqref{eqdimun} holds if, and only if,
$\tau < \frac{1}{\mu(\xi)-1}$ (except maybe for $\tau =
\frac{1}{\mu(\xi)-1}$: this case is not settled in \cite{eddzero}).
This gives a satisfactory answer for any given $\xi$, and it would be
interesting to generalize it to arbitrary values of $m$ and $n$:
questions in this respect are asked (in the case $n=1$) in \S 4 of
\cite{eddzero}.  This result shows also that the conclusion of Theorem
\ref{thopti} does not hold for {\em any} $\bfe_1,\ldots,\bfe_n$:
property \eqref{eqdimun} fails to hold for $\tau=1$ if $\mu(\xi) > 2$.

\medskip

If $\xi$ is generic (with respect to Lebesgue measure), then $\mu(\xi)
= 2$ and the question left open in \cite{eddzero} is whether property
\eqref{eqdimun} holds for $\tau=1$.  Theorem \ref{thopti} answers this
question: it does, and the error term $Q^{o(1)}$ can be bounded
between powers of $\log Q$. Moreover, Theorem \ref{thopti} provides,
for any $Q$, two linearly independent vectors $\bfq$ as in
\eqref{eqdimun}: as far as we know, no result in the style of
\cite{eddzero} provides this conclusion for a non-generic $\xi$.

\medskip

In the same situation (namely with $m=2$, $n=1$, $F=\R^2$, and a
generic $\xi$), Theorem \ref{thopti} with $\tau_1 = 1 $ and
$\beta_1>1$ provides (for any $Q$) two linearly independent vectors
$\bfq = (q_1,q_2)\in\Z^2 $ such that $|\bfq|\ll Q$ and
\begin{equation} \label{equnlog}
Q^{-1} (\log Q)^{-\beta_1} \ll |  q_1\xi-q_2| \ll Q^{-1} (\log Q)^{ \beta_1} .
\end{equation}
The lower bound on $ | q_1\xi-q_2|$ is natural since for infinitely
many $Q$ there exists $\bfq $ such that $|\bfq|\leq Q$ and $Q^{-1}
(\log Q)^{-\beta_1} \ll | q_1\xi-q_2| \ll Q^{-1} (\log Q)^{ -1}$.  The
upper bound in Eq. \eqref{equnlog} could seem too large, since
Dirichlet's pigeonhole principle yields (for any $Q$) a non-zero $\bfq
$ such that $|\bfq|\leq Q$ and $ | q_1\xi-q_2| \ll Q^{-1} $. However
it is possible (by adapting the proof of Theorem \ref{thopti}) to
prove that, for infinitely many $Q$, all vectors $\bfq \in \Z^2 $ such
that $|\bfq| \ll Q$ and $ | q_1\xi-q_2| \ll Q^{-1} $ are collinear. To
obtain two linearly independent such vectors, one needs (for
infinitely many $Q$) to let $ | q_1\xi-q_2| $ increase a little more,
at least up to $Q^{-1} \log Q $: the upper bound in Eq.
\eqref{equnlog} is optimal (except that the case $\beta_1 = 1$ could
probably be considered, upon multiplying by a power of $\log \log Q$).

\subsection{Proof of Theorem \ref{thopti}} \label{secpreuveopti}

Before proving Theorem \ref{thopti}, let us outline the strategy in
the case where $F=\R^m$ and $ \tau_1+\ldots+\tau_n = \dim F - n $
(from which we shall deduce the general case). The convex body $\calC
\subset \R^m$ defined by \eqref{eqopti1} and the second inequality in
\eqref{eqopti2} has volume essentially equal to a power of $\log Q$.
There are non-zero integer points $\bfq$ inside $\calC$, but not ``too
far away inside'' (for $Q$ sufficiently large) : if $\bfq$ is such a
point and $\mu > 0$ is such that $\mu \bfq \in \calC$, then $\mu$ is
less than some power of $\log Q $ (otherwise the scalar products $|
\bfq \cdot \bfe_i|$ would be too small: this would contradict the
convergent case of Theorem \ref{thm:KG}). This is a lower bound on the
first successive minimum $\lambda_1$ of $\calC$. Using Minkowski's
convex body theorem, this yields an upper bound on the last successive
minimum $\lambda_m$, namely $\lambda_m \ll 1$. This concludes the
proof, except for the lower bound in Eq. \eqref{eqopti2} for which the
argument is similar: if $| \bfq ^{(j)} \cdot \bfe_i|$ is too small for
some $i,j$ then $(\bfe_1,\ldots,\bfe_n)$ is not generic (using again
the convergent case of Theorem \ref{thm:KG}).

\bigskip

Let us come now to a detailed proof of Theorem \ref{thopti}, starting
with the following remark.

\begin{Rem} \label{remegal} The general case of Theorem \ref{thopti}
  follows from the special case where the inequality in Eq.
  \eqref{eqhypopti} is an equality, that is $ \tau_1+\ldots+\tau_n =
  \dim F - n $. Indeed in general we have $ \tau_1+\ldots+\tau_n =
  \eta( \dim F - n ) $ with $0 < \eta \leq 1$, and applying the
  special case with $\tau_1/\eta$, \ldots, $\tau_n/\eta$ and $Q^\eta$
  yields the desired conclusion.
\end{Rem}

\bigskip

As a first step, let us assume that Theorem \ref{thopti} holds if
$F=\R^m$, and deduce the general case. Since $F$ is defined over $\Q$,
there exists a basis $(\bfu_1,\ldots,\bfu_d)$ of $F$ consisting in
vectors of $\Z^m$ (where $d = \dim F$; notice that Eq.
\eqref{eqhypopti} implies $d > n$). Let $\Phi:\R^d \to \R^m$ be the
linear map which sends the canonical basis of $\R^d$ to
$(\bfu_1,\ldots,\bfu_d)$. The special case of Theorem \ref{thopti}
applies to $\R^d$ (with the same parameters); it provides a subset
$\Ati \subset (\R^d)^n$ of full Lebesgue measure, and for any
$(\eti_1,\ldots,\eti_n)\in \Ati$ and any $Q$ sufficiently large $d$
linearly independent vectors $\bfqtiu, \ldots, \bfqtid\in\Z^d$.  Then
we let $A \subset F^n$ denote the set of all $n$-tuples
$(\bfe_1,\ldots,\bfe_n)$ given by $\bfe_1=\Phi(\eti_1)$, \ldots,
$\bfe_n = \Phi(\eti_n)$ with $(\eti_1,\ldots,\eti_n)\in \Ati$; this
subset $A$ has full Lebesgue measure in $F^n = (\Im \Phi)^n$.

Let us denote by $\Om \in M_d(\R) $ the matrix in the basis
$(\bfu_1,\ldots,\bfu_d)$ of the scalar product of $\R^m$ restricted to
$F$. This means that for any $\bfx, \bfy\in\R^d$ we have $\Phi(\bfx)
\cdot \Phi(\bfy) = \tra \bfx \Omega \bfy$, where $\bfx$ and $ \bfy$
are seen as column vectors (indeed they are the vectors of coordinates
in the basis $(\bfu_1,\ldots,\bfu_d)$ of $\Phi(\bfx) $ and $
\Phi(\bfy) $ respectively). This matrix $\Om$ has integer coefficients
(given by $\bfu_k \cdot \bfu_\ell$ for $1\leq k,\ell \leq d$), and a
non-zero determinant, so that $(\det \Om)\Om^{-1}$ is a matrix with
integer coefficients.

Let $(\bfe_1,\ldots,\bfe_n)\in A$, and  $Q$ be sufficiently large.  We let
$$\bfqj = \Phi\Big(  (\det \Om)\Om^{-1} \bfqtij\Big) \mbox{ for any } j \in \und,$$
so that
$$\bfqj \cdot \bfe_i = \tra \Big(  (\det \Om)\Om^{-1} \bfqtij\Big) \Om \eti_i =  (\det \Om) \bfqtij \cdot \eti_i  \mbox{ for any } i \in \unn$$
because $\Om$ is symmetric. Therefore Eqns. \eqref{eqopti1} and
\eqref{eqopti2} hold for $j \leq d$; moreover $\bfqu$, \ldots, $\bfqd$
are linearly independent vectors in $\Z \bfu_1+\ldots+\Z \bfu_d
\subset F \cap \Z^m$ (because the coefficients of $(\det \Om)\Om^{-1}$
are integers).

Since $F^\perp$ is a subspace of $\R^m$ defined over the rationals
(because $F$ is), there exists a basis $(\bfv_{d+1},\ldots, \bfv_m)$
of $F^\perp$ consisting in vectors of $\Z^m$. Then we let
$$\bfqdpu = \bfv_{d+1} + \bfqu, \ldots, \bfqm = \bfv_m + \bfqu.$$
Then $\bfqu, \ldots, \bfqm$ are linearly independent vectors in
$\Z^m$, and for any $j \in \{d+1,\ldots,m\}$ and any $i\in\unn$ we
have $\bfqj\cdot \bfe_i = \bfqu \cdot \bfe_i$ so that Eq.
\eqref{eqopti2} holds. Since $\bfv_{d+1},\ldots, \bfv_m$ can be chosen
independently from $Q$, we have also $|\bfqj| \ll |\bfqu| \ll Q$ so
that Eq. \eqref{eqopti1} holds too. This concludes the proof that the
full generality of Theorem \ref{thopti} follows from the special case
where $F=\R^m$.

\bigskip

From now on, we assume that $F=\R^m$ and prove Theorem \ref{thopti} in
this case.

Let $A_0$ denote the set of all $(\bfe_1,\ldots,\bfe_n)\in(\R^m)^n $
such that the system of inequalities
\begin{equation} \label{eqdefAz} | \bfq \cdot \bfe_i | \leq | \bfq |
  ^{-\tau_i} (\log|\bfq|) ^{\beta_i - (1+\eps)(1+\tau_i)} \mbox{ for
    any } i \in \unn
\end{equation}
holds for only finitely many $\bfq \in \Z^m$.

For any $i_0 \in \unn$, let $A_{i_0}$ denote the set of all
$(\bfe_1,\ldots,\bfe_n)\in(\R^m)^n $ such that the system of
inequalities
\begin{equation}
\left\{ \begin{array}{l}
| \bfq \cdot \bfe_i | \leq c_1  | \bfq | ^{-\tau_i} (\log|\bfq|) ^{\beta_i }    \mbox{ for any } i \in \unn, i \neq i_0 \\
| \bfq \cdot \bfe_{i_0} | \leq   | \bfq | ^{-\tau_{i_0}} (\log|\bfq|) ^{\beta_{i_0} - m (1+\eps)  }
\end{array} \right.
\end{equation}
holds for only finitely many $\bfq \in \Z^m$; here $c_1$ is a positive  constant the will be defined later in the proof (namely in Eq. \eqref{eqqj}), but could have be made explicit and stated here.

Using Eq. \eqref{eqhypopti} and Remark \ref{remegal}, the convergent
case of Theorem \ref{thm:KG} (with $(x_{1i},\ldots,x_{mi}) = \bfe_i$)
implies that $A_i \cap [-\frac12,\frac12]^{nm}$ has full Lebesgue
measure for any $i \in \{0,\ldots,n\}$. Since $A_i$ is stable under
multiplication by scalars, we have $A_i = \cup_{n \in \N} n (A_i \cap
[-\frac12,\frac12]^{nm})$ so that $A_i$ has full Lebesgue measure. At
last, let $\Ainf$ denote the set of all
$(\bfe_1,\ldots,\bfe_n)\in(\R^m)^n $ such that $\bfq \cdot \bfe_i \neq
0$ for any $\bfq\in\Z^m\setminus\{0\}$ and any $i \in\unn$. Then we
let $A = A_0 \cap A_1\cap\ldots\cap A_n \cap\Ainf$, and $A$ has full
Lebesgue measure in $(\R^m)^n $.

\bigskip

Let $(\bfe_1,\ldots,\bfe_n)\in A$, and $Q$ be sufficiently large. Let
$\calC$ denote the set of all $\bfq \in\R^m$ such that
\begin{equation} \label{eqdefC}
| \bfq | \leq Q \mbox{ and } | \bfq \cdot \bfe_i | \leq Q^{-\tau_i} (\log Q)^{\beta_i} \mbox{ for any } i \in \unn.
\end{equation}
Then $\calC$ is convex, compact, and symmetric with respect to the
origin. Its volume (denoted by $\vol(\calC)$) is such that
$\vol(\calC) \asymp (\log Q)^{(1+\eps)(m-1)}$, using both equalities
of Eq. \eqref{eqhypopti} (thanks to Remark \ref{remegal}) with $\dim F
= m$.

For any $j \in \unm$ let $\lam_j$ denote the infimum of the set of all
positive real numbers $\lam $ such that $\Z^m \cap \lam \calC$
contains $j$ linearly independent vectors, where $\lam \calC = \{ \lam
\bfq, \, \bfq \in \calC\}$. These $\lam_j$ are the successive minima
of the convex body $\calC$ with respect to the lattice $\Z^m$;
Minkowski's theorem (see for instance \cite{Cassels}, Chapter VIII)
yields $\frac{2^m}{m!} \leq \lam_1\ldots\lam_m \vol(\calC) \leq 2^m$,
so that
\begin{equation} \label{eqprodlam}
   \lam_1\ldots\lam_m   \asymp (\log Q)^{-(1+\eps)(m-1)}.
\end{equation}

Since $(\bfe_1,\ldots,\bfe_n)\in A_0$, for any $\bfq \in \Z^m
\setminus\{\bfze\}$ there exists $i \in\unn$ (which depends on
$\bfe_1,\ldots,\bfe_n$ and $\bfq$) such that
\begin{equation} \label{equtileAz}
| \bfq \cdot \bfe_i | \gg  | \bfq | ^{-\tau_i} (\log|\bfq|) ^{\beta_i - (1+\eps)(1+\tau_i)}
\end{equation}
where the constant implied in the symbol $\gg$ is small enough to take
into account the finitely many $\bfq \in \Z^m \setminus\{\bfze\} $
that satisfy Eq. \eqref{eqdefAz}; we have used here that $ \bfq \cdot
\bfe_i \neq 0$ for any $\bfq \in \Z^m \setminus\{\bfze\} $ and any
$i$, because $(\bfe_1,\ldots,\bfe_n)\in \Ainf$.

Let us deduce from this property that $ \lam_1 \gg (\log
Q)^{-(1+\eps)}$. With this aim in view, we let $\lam > 0$ be such that
$Q^{-1/2} \leq \lam \leq 1$ and $\lam \calC \cap \Z^m \neq \{\bfze\}$;
we are going to prove that $ \lam \gg (\log Q)^{-(1+\eps)}$. There
exists $\bfq' \in \calC$ such that $\bfq = \lam \bfq' \in \Z^m$ and
$\bfq \neq \bfze$. Then Eq. \eqref{equtileAz} provides an integer $i
\in\unn$ such that, using Eq. \eqref{eqdefC}:
$$  | \bfq | ^{-\tau_i} (\log|\bfq|) ^{\beta_i - (1+\eps)(1+\tau_i) } \ll   | \bfq \cdot \bfe_i | =  \lam   | \bfq ' \cdot \bfe_i | \leq \lam Q ^{-\tau_i}  (\log Q)^{\beta_i}.$$
Since we have also $| \bfq | = \lam | \bfq ' | \leq \lam Q$ and
$Q^{-1/2} \leq \lam \leq 1$ (so that $\log (\lam Q) \gg \log Q$), this
yields
$$\lam^{-\tau_i}Q ^{-\tau_i}   (\log Q)^{\beta_i- (1+\eps)(1+\tau_i) }  \ll   ( \lam Q)  ^{-\tau_i} (\log ( \lam Q) ) ^{\beta_i - (1+\eps)(1+\tau_i) } \ll  \lam Q ^{-\tau_i}  (\log Q)^{\beta_i},$$
thereby proving that $ \lam \gg (\log Q)^{-(1+\eps)}$. This concludes
the proof that $ \lam_1 \gg (\log Q)^{-(1+\eps)}$; since
$\lam_1\leq\ldots\leq\lam_m$ by definition of the successive minima,
this implies $\lam_j \gg (\log Q)^{-(1+\eps)}$ for any $j \in \unm$.
Plugging this lower bound for $j \leq m-1$ into Eq. \eqref{eqprodlam}
yields $\lam_m \ll 1$: there exist linearly independent vectors
$\bfqu, \ldots, \bfqm\in\Z^m$ such that
\begin{equation} \label{eqqj}
| \bfqj | \ll Q \mbox{ and } | \bfqj \cdot \bfe_i | \ll Q^{-\tau_i} (\log Q)^{\beta_i} \mbox{ for any } i \in \unn.
\end{equation}
This concludes the proof of Eq. \eqref{eqopti1},  and  that of the upper bound in Eq. \eqref{eqopti2}.

To prove  the lower bound in Eq. \eqref{eqopti2}, we start by noticing that Eq. \eqref{eqqj} yields
\begin{equation} \label{eqqjbis}
 | \bfqj \cdot \bfe_i | \leq c_1 Q^{-\tau_i} (\log Q)^{\beta_i} \mbox{ for any } i \in \unn
\end{equation}
for some positive constant $c_1$ (which could be made explicit); this
constant is the one used in the definition of $A_{i_0}$ at the
beginning of the proof.  Now let $i_0 \in \unn$.  Since
$(\bfe_1,\ldots,\bfe_n)\in A_{i_0}$ and $ \bfq \cdot \bfe_{i_0} \neq
0$ for any $\bfq \in \Z^m \setminus\{\bfze\} $, there exists a
positive constant $c_2$ such that no non-zero $\bfq\in\Z^m$ satisfies
the system of inequalities
\begin{equation}  \label{eqAizd}
\left\{ \begin{array}{l}
| \bfq \cdot \bfe_i | \leq c_1  | \bfq | ^{-\tau_i} (\log|\bfq|) ^{\beta_i }    \mbox{ for any } i \in \unn, i \neq i_0 \\
| \bfq \cdot \bfe_{i_0} | \leq c_2  | \bfq | ^{-\tau_{i_0}} (\log|\bfq|) ^{\beta_{i_0} - m (1+\eps)  }
\end{array} \right.
\end{equation}
For any $j \in \unm$, the non-zero vector $\bfqj$ satisfies the first
family of inequalities in \eqref{eqAizd} (thanks to Eq.
\eqref{eqqjbis}), so that
$$| \bfqj \cdot \bfe_{i_0} | >  c_2  | \bfqj | ^{-\tau_{i_0}} (\log|\bfqj|) ^{\beta_{i_0} - m (1+\eps)  }  \gg Q ^{-\tau_{i_0}} (\log Q |) ^{\beta_{i_0} - m (1+\eps)  }
$$
since $|\bfqj| \ll Q$. This concludes the proof of  the lower bound in Eq. \eqref{eqopti2}, and that of Theorem \ref{thopti}.

\section{Proof of Theorem \ref{thm:KG}}
\label{sec:proof-theorem}

\subsection{Convergence case for any choice of $m$ and $n$}
\label{sec:convergence-case}

In order to prove the convergence case, we will exhibit a family of
covers of $W_0(m,n,\underline{\psi})$. The covers will be the natural
ones, \emph{i.e.}, the cover of $W_0(m,n,\underline{\psi})$ by the
sets of solutions to \eqref{eq:2} for each individual $\mathbf{q}$.
Denote for each $\mathbf{q} \in \mathbb{Z}^m$, the set of matrices
with entries in $[-1/2, 1/2]$ satisfying the system of inequalities
\eqref{eq:2} by $B(\mathbf{q}, \underline{\psi})$. It is
straightforward to see that
\begin{equation}
  \label{eq:5}
  \lambda_{mn}(B(\mathbf{q}, \underline{\psi})) \asymp
  \psi_1(\abs{\mathbf{q}}) \cdots \psi_n(\abs{\mathbf{q}})
  \abs{\mathbf{q}}^{-n}.
\end{equation}
Here, the implied constants depend on $m$ and $n$.

Secondly, we will need to estimate the number of $\mathbf{q} \in
\mathbb{Z}^m \setminus \{\mathbf 0\}$ of a given norm, $r$ say. This is
however easily seen to be at most $(2m-1) r^{m-1}$, and so comparable
with $r^{m-1}$.

We now estimate the Lebesgue measure of $W_0(m,n,\underline{\psi})$
under the assumption of convergence. For each $N \geq 1$,
\begin{multline*}
  \lambda_{mn}\left(W_0(m,n,\underline{\psi})\right) \leq \lambda_{mn}\left(\bigcup_{r \geq N}
    \bigcup_{\abs{\mathbf{q}} = r} B(\mathbf{q}, \underline{\psi})\right)
  \leq \sum_{r \geq N} \sum_{\abs{\mathbf{q}} = r} \lambda_{mn}\left(B(\mathbf{q},
    \underline{\psi})\right) \\
  \leq \sum_{r \geq N} \sum_{\abs{\mathbf{q}} = r} \psi_1(r) \cdots
  \psi_n(r) r^{-n} \ll \sum_{r \geq N} \psi_1(r) \cdots \psi_n(r)
  r^{m-n-1}.
\end{multline*}
We have used \eqref{eq:5} and the counting estimates. The final sum is
the tail of a convergent series, which tends to zero as $N$ tends to
infinity.

\subsection{Divergence case}
\label{sec:divergence}
We give a general approach to the problem in question which has been adapted from the one used in \cite{hussain:_metric}.   In the case
$(m,n) \neq (2,1)$, we will not need the assumption of monotonicity of
the approximating functions. This will be clear from the proof below.

For each $\mathbf{q} \in \mathbb{Z}^{m-n}$, let
\begin{equation*}
  B_\mathbf{q} = \bigcup_{\substack{\mathbf{p} \in \mathbb{Z}^n \\
      \abs{\mathbf{p}} \leq \abs{\mathbf{q}}}} \left\{A \in M_{m \times n}
    ([-1/2,1/2]) : \abs{(\mathbf{p}, \mathbf{q})A}_i \leq
    \psi_i(\abs{\mathbf{q}}) \right\}
\end{equation*}
Writing each $A \in M_{m \times n} ([-1/2,1/2])$ as
$\binom{I_n}{\tilde{A}} X$, where $X$ is the $n \times n$ matrix
formed by the first $n$ rows of $A$, we find the related set
\begin{equation*}
  B'_\mathbf{q}(X) = \bigcup_{\substack{\mathbf{p} \in \mathbb{Z}^n \\
      \abs{\mathbf{p}} \leq \abs{\mathbf{q}}}}\left\{\tilde{A} \in
    M_{(m-n) \times n} ([-1/2,1/2]) : \abs{pX + \mathbf{q} \tilde{A} X}_i
    \leq  \psi_i(\abs{\mathbf{q}}) \right\}.
\end{equation*}
Finally, set $B'_\mathbf{q} = B'_\mathbf{q}(I_n)$.

Let $\epsilon > 0$ be fixed and sufficiently small. We will be more explicit later. From now on, we restrict ourselves to
considering matrices $A$ for which the determinant of the matrix $X$
consisting of the first $n$ rows of $A$ is $>\epsilon$. Evidently,
this determinant is also $\leq n!$. This immediately implies that $X$
is invertible with $(n!)^{-1} \leq \abs{\det(X^{-1})} < \epsilon^{-1}$.

\begin{lem}
  \label{lem:lem1}
  For each $X \in M_{n}([-1/2,1/2])$ with $\abs{\det(X)} > \epsilon$,
  and each $\mathbf{q}, \mathbf{q}_1, \mathbf{q}_2 \in
  \mathbb{Z}^{m-n}$,
  \begin{equation*}
    \lambda_{(m-n)n}(B'_\mathbf{q}(X)) \asymp_\epsilon
    \lambda_{(m-n)n}(B'_\mathbf{q}),
  \end{equation*}
  and
  \begin{equation*}
    \lambda_{(m-n)n}(B'_{\mathbf{q}_1}(X) \cap B'_{\mathbf{q}_2}(X))
    \asymp_\epsilon \lambda_{(m-n)n}( B'_{\mathbf{q}_1} \cap
    B'_{\mathbf{q}_2}) .
  \end{equation*}
\end{lem}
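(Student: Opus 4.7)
The strategy is to decompose both $B'_{\mathbf q}(X)$ and $B'_{\mathbf q}=B'_{\mathbf q}(I_n)$ as pairwise-disjoint unions of fiber sets indexed by $\mathbf p\in\mathbb Z^n$ with $|\mathbf p|\leq|\mathbf q|$, and to compare them fiber by fiber. Setting $\Phi = \prod_{i=1}^n[-\psi_i(|\mathbf q|),\psi_i(|\mathbf q|)]\subset\mathbb R^n$, the condition $|(\mathbf p+\mathbf q\tilde A)X|_i\leq\psi_i(|\mathbf q|)$ for all $i$ is equivalent to $\mathbf p+\mathbf q\tilde A\in\Phi X^{-1}$. Writing
\[
E_{\mathbf p}(X) = \bigl\{\tilde A\in M_{(m-n)\times n}([-1/2,1/2]) : \mathbf p+\mathbf q\tilde A\in\Phi X^{-1}\bigr\},
\]
one has $B'_{\mathbf q}(X)=\bigcup_{|\mathbf p|\leq|\mathbf q|}E_{\mathbf p}(X)$, and $B'_{\mathbf q}$ is the analogous union with $\Phi$ in place of $\Phi X^{-1}$.

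The key geometric input is a uniform bound on $X^{-1}$. Since $\|X\|_\infty\leq 1/2$ and $|\det X|>\epsilon$, Cramer's rule yields $\|X^{-1}\|_\infty\ll_{n,\epsilon} 1$. Consequently $\Phi X^{-1}$ is a symmetric convex body of volume $\vol(\Phi)/|\det X|\asymp_\epsilon\vol(\Phi)$ and diameter $\ll_{n,\epsilon}\max_i\psi_i(|\mathbf q|)$, which is eventually $<1$. For $|\mathbf q|$ large enough this makes the $E_{\mathbf p}(X)$ pairwise disjoint, since any given $\tilde A$ can have $\mathbf p+\mathbf q\tilde A\in\Phi X^{-1}$ for at most one integer $\mathbf p$.

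The heart of the proof is the fiber comparison $\lambda_{(m-n)n}(E_{\mathbf p}(X))\asymp_\epsilon \lambda_{(m-n)n}(E_{\mathbf p}(I_n))$. Fix $\mathbf p$ and pick an index $k_0\in\{1,\ldots,m-n\}$ with $|q_{k_0}|=|\mathbf q|$. Writing the rows of $\tilde A$ as $\mathbf a_1,\ldots,\mathbf a_{m-n}\in[-1/2,1/2]^n$, for any fixed choice of $(\mathbf a_k)_{k\neq k_0}$ the constraint on $\mathbf a_{k_0}$ becomes $\mathbf a_{k_0}\in\mathbf v+q_{k_0}^{-1}\Phi X^{-1}$, where $\mathbf v = -q_{k_0}^{-1}(\mathbf p+\sum_{k\neq k_0}q_k\mathbf a_k)$. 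Integrating first in $\mathbf a_{k_0}$ and then in the remaining rows gives
\[
\lambda_{(m-n)n}(E_{\mathbf p}(X)) = |q_{k_0}|^{-n}\vol(\Phi X^{-1})\cdot\rho_{\mathbf p,\mathbf q}(X),
\]
where $\rho_{\mathbf p,\mathbf q}(X)\in[0,1]$ is the $(\mathbf a_k)_{k\neq k_0}$-averaged fraction of the translated fiber that lies inside $[-1/2,1/2]^n$. The uniform bound $\|X^{-1}\|\ll_\epsilon 1$ guarantees $\rho_{\mathbf p,\mathbf q}(X)\asymp_\epsilon \rho_{\mathbf p,\mathbf q}(I_n)$, and since $\vol(\Phi X^{-1})/\vol(\Phi) = |\det X|^{-1}\asymp_\epsilon 1$, the fiber comparison follows. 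Summing disjoint contributions proves the first claim.

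The second claim follows from the analogous decomposition $B'_{\mathbf q_1}(X)\cap B'_{\mathbf q_2}(X) = \bigcup_{(\mathbf p_1,\mathbf p_2)}(E^{(1)}_{\mathbf p_1}(X)\cap E^{(2)}_{\mathbf p_2}(X))$, where $E^{(\ell)}_{\mathbf p_\ell}(X)$ is built from $\mathbf q_\ell$. If $\mathbf q_1,\mathbf q_2$ are $\mathbb R$-linearly independent, pick indices $k_0,k_1$ maximizing the relevant $2\times 2$ minor of $\begin{pmatrix}\mathbf q_1\\\mathbf q_2\end{pmatrix}$ and run a two-row Fubini argument producing a comparison factor $|\det X|^{-2}\asymp_\epsilon 1$; otherwise $s\mathbf q_1 = t\mathbf q_2$ for coprime integers $s,t$, and the two constraints collapse to one subject to the integer compatibility $s\mathbf p_1-t\mathbf p_2 = 0$, reducing to the first claim. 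The main expected obstacle is controlling $\rho_{\mathbf p,\mathbf q}(X)$ when $|\mathbf p|$ is close to $|\mathbf q|_1/2$: there the translated fiber $\mathbf v+q_{k_0}^{-1}\Phi X^{-1}$ straddles the boundary of $[-1/2,1/2]^n$ and $\rho$ depends on the precise shape of $\Phi X^{-1}$. The uniform bound $\|X^{-1}\|\ll_\epsilon 1$ is exactly what guarantees that $\Phi X^{-1}$ and $\Phi$ are boundary-truncated in proportionally the same way, so the comparison constant depends only on $\epsilon$.
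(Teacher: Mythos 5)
Your proposal follows essentially the same approach as the paper's very terse proof: multiply the defining system by $X^{-1}$ (equivalently, pass from the box $\Phi$ to the parallelepiped $\Phi X^{-1}$), and control the resulting volume distortion by $|\det X|^{-1}\asymp_\epsilon 1$ together with the operator-norm bound $\|X^{-1}\|\ll_\epsilon 1$. You supply the details the paper leaves implicit -- the fiber decomposition over $\mathbf p$, eventual disjointness of the fibers, and the Fubini step isolating a dominant row $k_0$ -- and your treatment of the second claim (two-row Fubini in the independent case, collapse to the first claim in the dependent case) is a reasonable expansion of the paper's ``derived similarly.''

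The one step that does not hold as written is the pointwise comparison $\rho_{\mathbf p,\mathbf q}(X)\asymp_\epsilon\rho_{\mathbf p,\mathbf q}(I_n)$. For $\mathbf p$ in the thin boundary layer where the translated fiber $\mathbf v+q_{k_0}^{-1}\Phi X^{-1}$ only just grazes $[-1/2,1/2]^n$, one of $\rho_{\mathbf p,\mathbf q}(X)$, $\rho_{\mathbf p,\mathbf q}(I_n)$ can be zero while the other is positive but arbitrarily small, because $\Phi$ and $\Phi X^{-1}$ are convex bodies of comparable size but genuinely different shape; no constant depending only on $\epsilon,m,n$ can bound their ratio from both sides uniformly in $\mathbf p$. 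The repair is to abandon the term-by-term comparison and compare the sums: the bulk of admissible $\mathbf p$'s (with $\rho=1$) number $\asymp|\mathbf q|^n$ on both sides, while the boundary-layer $\mathbf p$'s number only $O(|\mathbf q|^{n-1})$ and contribute negligibly for $|\mathbf q|$ large. Thus $\sum_{\mathbf p}\rho_{\mathbf p,\mathbf q}(X)\asymp_\epsilon\sum_{\mathbf p}\rho_{\mathbf p,\mathbf q}(I_n)$, which is all the argument actually uses; the same correction should be applied inside your two-row Fubini for the second claim.
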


\begin{proof}
  Consider the defining inequalities for each set on the left hand
  sides. Multiplying by $X^{-1}$, we obtain a new system of
  inequalities, so that
  \begin{equation*}
    2^n \epsilon \prod_i \psi_i(\abs{\mathbf{q}}) \leq
    \lambda_{(m-n)n}(B'_\mathbf{q}(X)) \leq 2^n \epsilon^{-1} \prod_i
    \psi_i(\abs{\mathbf{q}}).
  \end{equation*}
Considering the special case
  when $X = I_n$, we obtain the first statement.

  The second statement is derived similarly, namely by considering the defining inequalities and multiplying by $X^{-1}$ to get an estimate for the measure.
\end{proof}

\begin{lem}
  \label{lem:2}
  For each pair $\mathbf{q}, \mathbf{q}'$,
  \begin{equation}
    \label{eq:6}
    \lambda_{mn}(B_\mathbf{q}) \asymp_\epsilon
    \lambda_{(m-n)n}(B'_\mathbf{q}),
  \end{equation}
  and
  \begin{equation}
    \label{eq:7}
    \lambda_{mn}(B_\mathbf{q} \cap B_{\mathbf{q}'})
    \asymp_{\epsilon} \lambda_{(m-n)n}(B'_\mathbf{q} \cap
    B'_{\mathbf{q}'}).
  \end{equation}
\end{lem}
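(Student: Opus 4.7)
The plan is to use Fubini's theorem, slicing $B_{\mathbf{q}}$ along the fibers of the map $A \mapsto X$, where $X$ is the $n \times n$ block consisting of the first $n$ rows of $A$. Since we have restricted attention to matrices with $\abs{\det X} > \epsilon$, for each such $X$ the bottom $m-n$ rows can be written as $A_{\text{bottom}} = \tilde{A} X$, and the change of variables $A_{\text{bottom}} \leftrightarrow \tilde{A}$ contributes a Jacobian $\abs{\det X}^{m-n}$. Under this change of coordinates the fiber of $B_{\mathbf{q}}$ over $X$ becomes (up to the irrelevant issue of which cube $\tilde{A}$ is constrained to lie in, which is absorbed into $\asymp_\epsilon$) precisely the set $B'_{\mathbf{q}}(X)$.

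So one writes, for some bounded region $\calC_\epsilon \subset M_n(\R)$ of matrices with $X \in M_n([-\tfrac12,\tfrac12])$ and $\abs{\det X} > \epsilon$,
\begin{equation*}
\lambda_{mn}(B_{\mathbf{q}}) \asymp_{\epsilon} \int_{\calC_\epsilon} \abs{\det X}^{m-n} \lambda_{(m-n)n}(B'_{\mathbf{q}}(X)) \, dX.
\end{equation*}
Now apply the first statement of Lemma \ref{lem:lem1} to replace $\lambda_{(m-n)n}(B'_{\mathbf{q}}(X))$ by $\lambda_{(m-n)n}(B'_{\mathbf{q}})$ at the cost of an $\epsilon$-dependent constant. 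Since on $\calC_\epsilon$ the factor $\abs{\det X}^{m-n}$ is pinched between $\epsilon^{m-n}$ and $(n!)^{m-n}$, and since $\vol(\calC_\epsilon)$ is a positive constant depending only on $m$, $n$, $\epsilon$, the integral above is $\asymp_\epsilon \lambda_{(m-n)n}(B'_{\mathbf{q}})$, proving \eqref{eq:6}.

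The proof of \eqref{eq:7} is identical, using the \emph{second} statement of Lemma \ref{lem:lem1} applied to $B'_{\mathbf{q}_1}(X) \cap B'_{\mathbf{q}_2}(X)$ in place of $B'_{\mathbf{q}}(X)$. Note that intersection commutes with slicing, so Fubini applied to $B_{\mathbf{q}_1} \cap B_{\mathbf{q}_2}$ produces exactly the fibre $B'_{\mathbf{q}_1}(X) \cap B'_{\mathbf{q}_2}(X)$ after the change of variables.

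The only mild subtlety — not really an obstacle — is that after replacing $A_{\text{bottom}}$ by $\tilde{A} = A_{\text{bottom}} X^{-1}$, the set $\{\tilde{A} : \tilde{A} X \in [-\tfrac12,\tfrac12]^{(m-n)n}\}$ is not literally the cube $[-\tfrac12,\tfrac12]^{(m-n)n}$ that appears in the definition of $B'_{\mathbf{q}}(X)$. However, because $\abs{\det X^{\pm 1}}$ is bounded in terms of $\epsilon$ alone, this transformed cube is sandwiched between two fixed cubes, and the argument used in Lemma \ref{lem:lem1} (multiplying the defining slab inequalities by $X^{-1}$) handles the resulting measure discrepancy within the $\asymp_\epsilon$ tolerance.
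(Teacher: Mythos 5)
Your proposal follows the same overall strategy as the paper: slice $B_{\mathbf q}$ over the block $X$ via Fubini, change variables so the fiber becomes (a version of) $B'_{\mathbf q}(X)$, absorb the Jacobian $\abs{\det X}^{m-n}$ into the $\asymp_\epsilon$ constant since $\epsilon < \abs{\det X} \le n!$, and then invoke Lemma \ref{lem:lem1} to replace $B'_{\mathbf q}(X)$ by $B'_{\mathbf q}$. This much is exactly right. The case of the intersections is handled in parallel in both arguments.

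However, there is a genuine gap at the step you label the ``only mild subtlety.'' After the change of variables $\tilde A = A_{\mathrm{bottom}}X^{-1}$, the fiber is the intersection of the defining slabs with the \emph{parallelepiped} $M_{(m-n)\times n}([-\tfrac12,\tfrac12])X^{-1}$, whereas $B'_{\mathbf q}(X)$ by definition uses the unit cube. You assert this discrepancy is handled because the parallelepiped is ``sandwiched between two fixed cubes'' and ``the argument used in Lemma~\ref{lem:lem1} \ldots handles the resulting measure discrepancy.'' Neither of these claims closes the gap. Lemma~\ref{lem:lem1} compares $B'_{\mathbf q}(X)$ to $B'_{\mathbf q}$, both intersected with the \emph{same} cube; it says nothing about changing the domain of integration. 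And the sandwiching observation is not enough on its own: the slab set has tiny measure (of order $\prod_i \psi_i(\abs{\mathbf q})$), and in general a set of tiny measure lying inside a cube of side $\sim 1/\epsilon$ may have measure wildly different from its intersection with a cube of side $\sim 1/n$ --- it could, \emph{a priori}, sit entirely in the annular region. What actually makes the comparison work is the (approximate) $\Z^{(m-n)n}$-periodicity in $\tilde A$ of the union of slabs over $\mathbf p$ (shifting $\tilde A$ by an integer matrix $K$ shifts $\mathbf q \tilde A$ by $\mathbf q K \in \Z^n$, which can be absorbed into $\mathbf p$). This lets one tile the parallelepiped by fundamental domains, each contributing measure $\asymp \lambda_{(m-n)n}(B'_{\mathbf q}(X))$ up to boundary terms, with the number of tiles pinched between $1$ and an $\epsilon$-dependent constant. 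The paper carries this out explicitly (first for $m=3$, $n=1$, then for general $m-n>1$, and with a separate counting argument when $m-n=1$, since then the fiber set is a union of small neighbourhoods of points). This periodicity/tiling step is the real content of the inner-integral estimate, and it is missing from your write-up.

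\end{document}
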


\begin{proof}
  This follows on integrating out the $X$ and applying Lemma
  \ref{lem:lem1}. Indeed,
  \begin{equation*}
    \lambda_{mn}(B_\mathbf{q}) \asymp_\epsilon \int_{\substack{X \in M_n([-1/2,1/2])
        \\ \epsilon < \abs{\det(X)}}} \int_{\tilde{A} \in M_{(m-n)\times
        n}([-1/2,1/2])X^{-1}}
    \mathbf{1}_{B_\mathbf{q}}\left(\binom{I_n}{\tilde{A}}X\right)
    d\tilde{A} dX,
  \end{equation*}
  where $\mathbf{1}_{B_{\mathbf{q}}}$ denotes the characteristic
  function of $B_{\mathbf{q}}$. Let us prove that the inner integral is
  $\asymp_\epsilon \lambda_{(m-n)n}(B'_q(X))$.

  First, we deal with the case when $m-n>1$. For simplicity, we
  consider first the case $m=3, n = 1$ and extend subsequently. We are
  integrating over the set $M_{2\times 1}([-1/2,1/2])X^{-1}$, which is
  a square of area between $1$ and $\epsilon^{-2}$, since $X$ in this
  case is just a number between $\epsilon$ and $1$. Consider the intersection with each
  fundamental domain for the standard lattice $\mathbb{Z}^2$. Except for lower order terms arising at the boundary of  $M_{2\times 1}([-1/2,1/2])X^{-1}$, each such intersection will have measure $\lambda_{(m-n)n}(B'_{\mathbf{q}}(X))$.  The number of such
  contributing fundamental domains is bounded from below by $1$ and
  from above by $\epsilon^{-2}$. Hence, the result follows in this
  case.

  To get the full result for $m-n > 1$, the set $M_{(m-n)\times
    n}([-1/2,1/2])X^{-1}$ still covers at least $\frac{1}{n}
  M_{(m-n)\times n}([-1/2,1/2])$, as the entries of $X$ are between
  $-1/2$ and $1/2$. For $\abs{\bfq}$ large enough, the measure of the
  intersection of $B'_{\mathbf{q}}(X)$ with this set is $\asymp
  \frac{1}{n^{(m-n)n}} \lambda_{(m-n)n}(B'_{\mathbf{q}}(X))$,  and the result
  follows. The upper bound again follows as the determinant of $X$ is
  bounded from below.

  When $m-n=1$, the set consists of neighbourhoods of single points,
  and we simply count the contributions as usual. We have now shown that  the inner integral is
  $\asymp_\epsilon \lambda_{(m-n)n}(B'_q(X))$.

  To conclude, we use Lemma \ref{lem:lem1},
  \begin{multline*}
    \lambda_{mn}(B_\mathbf{q}) \asymp_\epsilon \int_{\substack{X \in
        M_n([-1/2,1/2]) \\ \epsilon < \abs{\det(X)}}}
    \lambda_{(m-n)n}(B'_q(X)) dX\\
    \asymp_\epsilon \int_{\substack{X \in M_n([-1/2,1/2]) \\ \epsilon
        < \abs{\det(X)}}} \lambda_{(m-n)n}(B'_\mathbf{q}) dX \asymp_\epsilon
    \lambda_{(m-n)n}(B'_\mathbf{q}).
  \end{multline*}
  The case of intersections follows similarly, this time using the
  second equation of Lemma \ref{lem:lem1}.
\end{proof}

At this point, proving the divergence case of the theorem is a
relatively straightforward matter. Indeed, a form of the divergence case of the Borel--Cantelli lemma
states that if $(A_n)$ is a sequence of sets in a probability space
with probability measure $\mu$ such that $\sum \mu(A_n) = \infty$,
then
\begin{equation}
  \label{eq:3}
  \mu\left(\bigcap_{k=1}^\infty \bigcup_{n = k}^\infty A_n\right) \geq
  \limsup_{N \rightarrow \infty} \frac{\left(\sum_{n=1}^N
      \mu(A_n)\right)^2}{\sum_{m,n = 1}^N \mu(A_m \cap A_n)}.
\end{equation}
If one can prove that for a sufficiently large set of pairs $(A_n, A_m)$, the denominator on the right hand side is $\ll \mu(A_m)\mu(A_n)$ whenever $m \neq n$, it follows from \eqref{eq:3} that the measure of the set on the left hand side is strictly positive. Even if this does not hold, one could hope for it to be true on average, so that the resulting right hand side would be positive. This is a standard technique in metric Diophantine approximation, with the property on the sets $A_n$ being called quasi-independence or in the latter case quasi-independence on average. It follows from Lemma \ref{lem:2}, that if a classical Khintchine--Groshev theorem can be established
using quasi-independence on average, then the measure of the absolute
value set is positive under the appropriate divergence assumption.

In the classical setup, one usually proves Khintchine--Groshev type
results using a variant of this lemma. Here, one applies the lemma
with  some subset of the family $B_{\mathbf{q}}'$ in place of $A_n$. In the simplest case, when $n=1$ and $m=3$, the family can be chosen to be those $\mathbf{q} = (p, \tilde{\mathbf{q}}) \in \mathbb{Z} \times \mathbb{Z}^{2}$ with the entries of $\tilde{\mathbf{q}}$ co-prime and the last entry positive. This will ensure that the corresponding sets $\cup_p B_{(p,\tilde{\mathbf{q}})}'$  are stochastically independent and hence quasi-independent. The fact that we take a union over $p$'s s critical. This gives a pleasing description of the sets involved as neighbourhoods of geodesics winding around a torus, and provides a simple argument for the stochastic independence of the sets. For details on this case, see \cite{MR1258756}. In that paper, the case $m-n>1$ is fully described. For the case when $m-n=1$, more delicate arguments are required. Below, we give references to work, where the refining procedure is carried out in each individual case.

For our purposes, in order to prove Theorem \ref{thm:KG}, using Lemma \ref{lem:2} we will
translate the right hand side of inequality \eqref{eq:3} to a
statement on the `classical' sets $B'_\mathbf{q}$ with the
corresponding limsup set.  In the case $m-n>2$, the required upper
bound on the intersections on average was established in
\cite{MR0159802} without the monotonicity assumption.  For $m-n = 2$,
the bound is found in \cite{hussain:_khint} and $m-n=1$, this is the
result of \cite{MR0157939}. In the last case, the monotonicity is
critical in the case $m=2, n=1$, as otherwise we could exploit the
Duffin--Schaeffer counterexample \cite{MR0004859} to arrive at a
counterexample to the present statement.

Having established that the measure is positive, it remains to prove
that the measure is full. To accomplish this, we apply an inflation
argument due to Cassels \cite{Casselsdio}, but tweaked to the absolute value setup. We pick a slowly decreasing
function $\tau(r)$ which tends to $0$, such that the functions $\psi'_i(r) =
\tau(r)\psi_i(r)$ satisfy the divergence assumption of the theorem.

One can show that the origin $0 \in \mat_{mn}(\mathbb{R})$ is a point of metric density for the set $W_0(m,n;\underline{\psi})$. This uses two properties. One is the fact that $0$ is an inner point of each set of matrices satisfying \eqref{eq:2} for a fixed $\bfq$. The other is the fact that the error function depends only on $\abs{\bfq}$, so the parallelepiped of matrices satisfying \eqref{eq:2} does not change shape but only orientation as $\bfq$ varies over integer vectors with the same height $\abs{\bfq}$. Since the distribution of angles of integer vectors of the same height becomes uniform as the height increases, this implies that the origin must be a point of metric density.

Now, by the Lebesgue Density Theorem, for almost every matrix $A \in \R^{mn}$, there is a matrix near the origin $\tilde{A} \in W_0(m,n;\underline{\psi}')$ and a real number $r$, such that $A = r \tilde{A}$. That is,
\begin{equation*}
\abs{\bfq A}_i = \abs{\bfq r\tilde{A}}_i = \abs{r}\abs{\bfq \tilde{A}}_i < r \psi'_i(\abs{\bfq}),
\end{equation*}
for infinitely many $\bfq$.
This implies that $A \in W_0(m,n;\underline{\psi})$, since $r$ is fixed and $\tau$ tends to 0.

 \newcommand{\url}{\texttt}


\end{document}